\newcolumntype{P}[1]{>{\RaggedRight\hspace{0pt}}p{#1}}
\newcolumntype{L}[1]{>{\raggedright\let\newline\\\arraybackslash\hspace{0pt}}m{#1}}
\newcolumntype{C}[1]{>{\centering\let\newline\\\arraybackslash\hspace{0pt}}m{#1}}
\newcolumntype{R}[1]{>{\raggedleft\let\newline\\\arraybackslash\hspace{0pt}}m{#1}}
\newtheorem{theorem}{Theorem}
\newtheorem{lemma}{Lemma}
\newtheorem{corollary}{Corollary}
\newcommand{\ud}{\mathrm{d}}
\begin{document}

\title{\large \textbf{A note on some extensions of the matrix angular central Gaussian distribution.}}

\date{}

\author{Justyna Wróblewska}

\author{Justyna Wróblewska\thanks{Cracow University of Economics, Department of Econometrics and Operational Research, Rakowicka 27, 31-510 Kraków, Poland, e-mail: eowroble@cyf-kr.edu.pl}}

\maketitle
\thispagestyle{fancy}
\fancyhead{}

\begin{abstract}
This paper extends the notion of the matrix angular central distribution (MACG) to the complex case. We start by considering the normally distributed random complex matrix ($Z$) and show that is the orientation ($H_Z=Z(Z'Z)^{-1}$) has complex MACG (CMACG) distribution. Then we discuss the distribution of the orientation of the linear transformation of the random matrix which orientation part has CMACG distribution. Finally, we discuss the family of distributions which lead to the CMACG distribution.
\end{abstract}

\begin{center} \end{center}
\textbf{Keywords:} complex Stiefel manifold; complex Grassmann manifold; matrix angular central distribution;



\newpage

\section{Introduction}
\label{sec:intro}

The density and properties of the matrix angular central distribution (MACG) were introduced by \citet{Chikuse1990}. The distribution is defined for the elements of the Stiefel manifold. MACG distribution proofed to be very useful in the Bayesian analysis of cointegration \citet{Koop_al2009} and in Bayesian models combining cointegration with the idea of common cyclical features (see \citealp{Wroblewska2011}, \citeyear{Wroblewska2012}, \citeyear{Wroblewska2015}). An easy way of obtaining the pseudo-random sample from the MACG distribution belongs to its main advantages in the Bayesian analyses. Moreover, as MACG distribution is invariant to the right orthonormal transformations, it can be treated as the distributions defined on the Grassmann manifolds. This feature is an advantage in the above-mentioned analysis, the data contain information only about cointegration and common feature spaces, not about the vectors spanning them.  Finally, through the parameter of MACG distribution, the researcher can easily and transparently incorporate prior information about the analysed spaces. However, if the researcher is interested in the analysis of the seasonally cointegrated process (see e.g. \citealp{Hylleberg_al1990}, \citealp{Johansen_Schaumburg1999}, \citealp{Cubadda_Omtzigt2005}) within the Bayesian paradigm, the generalization of MACG to the complex case may be useful (see \citealp{Wroblewska2020}).

We start with the basic definitions and measure decomposition and then move to the definition of the complex matrix angular central distribution (CMACG) and its properties.

The set of $m\times r$ $(m\geq r)$ semi-unitary matrices, i.e. matrices fulfilling the condition $\bar{X}'X=I_r$, where $\bar{X}'$ denotes the conjugate transpose of $X$ and $I_r$ is the $r\times r$ identity matrix, is called the complex Stiefel manifold ($V_{r,m}^{\mathbb{C}}$):
$$V_{r,m}^{\mathbb{C}}=\left\{X_{m\times r}: \bar{X}'X=I_r,\ m\geq r\right\}.$$

An invariant measure on $V_{r,m}^{\mathbb{C}}$ is given by the differential form \citep{Di_Gu2011}:
$$(\bar{X}'\ \ud X)=\bigwedge_{i=1}^m\bigwedge_{j=i+1}^r\bar{x}_j'\ \ud x_i,$$
where $\bigwedge$ denotes the exterior product and the matrix $X_1$ is chosen such that $\mathbf{X}=\left(X,\ X_1\right)$ is an element of the unitary group ($\bar{\mathbf{X}}'\mathbf{X}=I_m$).\\
The volume of the complex Stiefel manifold is
$$Vol(V_{r,m}^{\mathbb{C}})=\int_{X\in V_{r,m}^{\mathbb{C}}}(\bar{X}'\ \ud X)=\frac{2^r\pi^{mr}}{\Gamma_r^{\mathbb{C}}[m]},$$
where  $\Gamma_r^{\mathbb{C}}[a]$ denotes the complex multivariate Gamma function, and is defined by:
$$\Gamma_r^{\mathbb{C}}[a]=\int_{A_{r\times r}>0, \bar{A}'=A}\exp\{-tr(A)\}|A|^{a-r}(\ud A)=\pi^{r(r-1)/2}\prod_{i=1}^r\Gamma[a-i+1],$$
where $tr(\cdot)$ denotes the trace, $|\cdot|$ - the determinant and $Re(a)>m-1$ (see \citealp{Gross_Richards1987}, \citealp{Di_Gu2011}).\\
The normalized invariant measure ($[dX]$) of unit mass on the considered manifold is defined as:
\begin{equation}
[\ud X]=\frac{(\bar{X}'\ \ud X)}{Vol(V_{r,m}^{\mathbb{C}})}=\frac{\Gamma_r^{\mathbb{C}}[m]}{2^r\pi^{mr}}(\bar{X}'\ \ud X).
\label{eq:inv}
\end{equation}

The next two theorems provide to Jacobians of the transformation which will be used through the paper.

\begin{theorem}[\citealp{Di_Gu2011}]
If $Y=AXB+C$, where $X\in\mathbb{C}^{m\times r}$ and $Y\in\mathbb{C}^{m\times r}$ are random matrices and $A\in\mathbb{C}^{m\times m},\ |A|\neq0$, $B\in\mathbb{C}^{r\times r},\ |B|\neq0$, $C\in\mathbb{C}^{m\times r}$ are matrices of constants, then
\begin{equation}
(\ud Y)=|\bar{A}'A|^r|\bar{B}'B|^m(\ud X),
\end{equation}
so that $J(Y\rightarrow X)=|\bar{A}'A|^r|\bar{B}'B|^m$.
\end{theorem}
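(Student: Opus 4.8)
The plan is to strip off the additive constant, reduce the problem to a single complex-linear map via vectorization, and then relate the complex determinant of that map to the real volume element. First I would observe that the shift by $C$ is a translation of the underlying real space $\mathbb{R}^{2mr}$ on which $(\ud X)$ is the Lebesgue volume element built from the real and imaginary parts of the entries; translations preserve this volume, so it suffices to compute the Jacobian of the homogeneous map $X\mapsto AXB$. Since this map is $\mathbb{R}$-linear, $(\ud Y)=\rho\,(\ud X)$, where $\rho$ is the absolute value of the determinant of the map regarded as a real endomorphism of $\mathbb{R}^{2mr}$.

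Next I would vectorize. Using the identity $\mathrm{vec}(AXB)=(B'\otimes A)\,\mathrm{vec}(X)$, the transformation is multiplication by the complex $mr\times mr$ matrix $M:=B'\otimes A$. The heart of the argument is the realification identity: for any complex $n\times n$ matrix $M$, the map $v\mapsto Mv$ on $\mathbb{R}^{2n}\cong\mathbb{C}^{n}$ has real-Jacobian determinant $|\bar{M}'M|$ (the determinant of the positive semidefinite matrix $\bar{M}'M$), rather than $|M|$. I would justify this by a Schur factorization $M=UT\bar{U}'$ with $U$ unitary and $T$ upper triangular: after realification a unitary factor becomes a real orthogonal matrix and contributes $1$, while the realified triangular factor has block-triangular real form whose determinant is $\prod_{k}|t_{kk}|^{2}$, and this equals $\overline{|M|}\,|M|=|\bar{M}'M|$.

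Finally I would evaluate the determinant by the Kronecker rule. Because $B'$ is $r\times r$ and $A$ is $m\times m$, one has $|M|=|B'\otimes A|=|B'|^{m}|A|^{r}=|B|^{m}|A|^{r}$, so
\begin{equation}
\rho=|\bar{M}'M|=\overline{|M|}\,|M|=\bigl(\overline{|B|}\,|B|\bigr)^{m}\bigl(\overline{|A|}\,|A|\bigr)^{r}=|\bar{B}'B|^{m}\,|\bar{A}'A|^{r},
\end{equation}
which is exactly the claimed factor $|\bar{A}'A|^{r}|\bar{B}'B|^{m}$. The genuinely delicate point is the realification identity $\det_{\mathbb{R}}=|\bar{M}'M|$, together with placing the Kronecker exponents on the correct factors ($m$ with $B$, $r$ with $A$); everything else is routine bookkeeping. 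As an alternative that sidesteps the Kronecker computation, one could instead factor $X\mapsto AXB$ through the two successive maps $X\mapsto AX$ and $Z\mapsto ZB$, compute the Jacobian $|\bar{A}'A|^{r}$ of the first (column by column) and $|\bar{B}'B|^{m}$ of the second (row by row), and multiply.
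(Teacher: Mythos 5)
Your argument is correct, but there is nothing in the paper to compare it against: the paper imports this theorem verbatim from D\'iaz-Garc\'ia and Guti\'errez-J\'aimez (2011) and supplies no proof of its own, so your derivation is a self-contained justification of a cited fact rather than an alternative to an existing one. The three ingredients you isolate are exactly the right ones and are each handled correctly: the translation by $C$ is volume-preserving; the vec identity $\mathrm{vec}(AXB)=(B'\otimes A)\,\mathrm{vec}(X)$ reduces everything to one complex-linear map; and the realification identity, that a complex matrix $M$ acting on $\mathbb{C}^n\cong\mathbb{R}^{2n}$ has real Jacobian $|\det M|^2=|\bar{M}'M|$, is the genuinely non-obvious step, which your Schur-factorization argument establishes cleanly (the realified unitary factor is orthogonal, and the realified triangular factor contributes $\prod_k|t_{kk}|^2$ via its $2\times2$ diagonal blocks). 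The Kronecker exponents are placed correctly, $|B'\otimes A|=|B|^m|A|^r$, which yields $|\bar{A}'A|^r|\bar{B}'B|^m$ as claimed. Your closing alternative, factoring through $X\mapsto AX$ (acting column by column, giving $|\bar{A}'A|^r$) and then $Z\mapsto ZB$ (row by row, giving $|\bar{B}'B|^m$), is in fact closer in spirit to how such Jacobians are usually derived in the matrix-variate literature and avoids the Kronecker determinant entirely; either version is acceptable.
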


\begin{theorem}[\citealp{Polcari2017}]
If $Y=BX\bar{B}'$, where $X\in\mathbb{C}^{m\times m}$ and $Y\in\mathbb{C}^{m\times m}$ are random Hermitian matrices $(\bar{X}'=X,\ \bar{Y}'=Y)$ and $B\in\mathbb{C}^{m\times m}$ is a non-singular $(|B|\neq0)$ matrix of constants, then
\begin{equation}
(\ud Y)=|B|^{2m}(\ud X),
\label{eq:jak}
\end{equation}
so that $J(Y\rightarrow X)=|B|^{2m}$.
\end{theorem}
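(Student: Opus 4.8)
\emph{Proof strategy.} The plan is to regard the congruence $X\mapsto BX\bar{B}'$ as a real-linear transformation $S$ of the real vector space $\mathcal{H}_m$ of $m\times m$ Hermitian matrices and to compute $|\det_{\mathbb{R}}S|$, which is exactly the Jacobian $J(Y\to X)$ attached to the form $(\ud X)$. The space $\mathcal{H}_m$ has real dimension $m^2$, with coordinates given by the $m$ real diagonal entries of $X$ together with the real and imaginary parts of the $\binom{m}{2}$ strictly upper entries; the right-hand side $|B|^{2m}$ is to be read as $|\det B|^{2m}=|\bar{B}'B|^m$, a positive real number since $\bar{B}'B$ is positive definite. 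I would first note that $S$ does map $\mathcal{H}_m$ into itself, because $\overline{(BX\bar{B}')}'=BX\bar{B}'$.

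The central device is multiplicativity. Since the composite of $X\mapsto B_2X\bar{B}_2'$ followed by $W\mapsto B_1W\bar{B}_1'$ equals $X\mapsto (B_1B_2)X\overline{(B_1B_2)}'$, and determinants of composed linear maps multiply, the Jacobian is multiplicative in $B$; the proposed value $|\det B|^{2m}$ is multiplicative too. Hence it suffices to verify the identity on the three factors of a singular value decomposition $B=U\Sigma V^{*}$, with $U,V$ unitary, $\Sigma=\mathrm{diag}(\sigma_1,\dots,\sigma_m)$, $\sigma_i>0$, and $V^{*}=\bar{V}'$.

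For a unitary factor $B=U$ the map $X\mapsto UX\bar{U}'$ is orthogonal with respect to the inner product $\langle X,Y\rangle=\mathrm{tr}(XY)$ on $\mathcal{H}_m$, because $\mathrm{tr}(UX\bar{U}'\,UY\bar{U}')=\mathrm{tr}(XY)$ by cyclicity and $\bar{U}'U=I_m$; an orthogonal map has determinant $\pm1$, matching $|\det U|^{2m}=1$. For the diagonal factor $B=\Sigma$ the transformation acts coordinatewise, sending $X_{ii}\mapsto\sigma_i^{2}X_{ii}$ and $X_{ij}\mapsto\sigma_i\sigma_jX_{ij}$, so the real Jacobian is $\prod_{i}\sigma_i^{2}\prod_{i<j}(\sigma_i\sigma_j)^{2}=\bigl(\prod_i\sigma_i\bigr)^{2m}=|\det\Sigma|^{2m}$. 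Assembling the three factors through multiplicativity then yields $J(Y\to X)=|\det B|^{2m}$, as claimed.

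The step needing the most care is keeping the unitary and diagonal contributions expressed in one fixed real coordinate system, together with the orthogonality claim for the unitary case. A cleaner route that sidesteps the singular-value bookkeeping is to complexify: extend $S$ to all of $\mathbb{C}^{m\times m}$, where as a $\mathbb{C}$-linear map it satisfies $\det_{\mathbb{C}}S=\det(\bar{B}\otimes B)=(\det\bar{B})^m(\det B)^m=|\det B|^{2m}$, so its real determinant is $|\det_{\mathbb{C}}S|^{2}=|\det B|^{4m}$. Since $\mathbb{C}^{m\times m}=\mathcal{H}_m\oplus i\mathcal{H}_m$ as real spaces, $S$ preserves each summand, and $S$ acts on $i\mathcal{H}_m$ exactly as it does on $\mathcal{H}_m$ under $H\mapsto iH$, the real determinant factors as the square of the sought Jacobian on $\mathcal{H}_m$; this reproduces $|\det B|^{2m}$ directly and gives an independent check.
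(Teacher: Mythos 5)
The paper does not actually prove this statement: it is imported from Polcari (2017) as a known Jacobian, so there is no in-paper argument to compare yours against. Judged on its own, your proof is correct and complete. The reduction, via multiplicativity of $B\mapsto(X\mapsto BX\bar{B}')$, to the three factors of a singular value decomposition is sound; the unitary factor is handled correctly because conjugation preserves the inner product $\mathrm{tr}(XY)$ on the real vector space $\mathcal{H}_m$ of Hermitian matrices (that form is indeed real-valued and positive definite there), and the diagonal count $\prod_i\sigma_i^2\prod_{i<j}(\sigma_i\sigma_j)^2=\bigl(\prod_i\sigma_i\bigr)^{2m}$ is right, since each $\sigma_k$ acquires total exponent $2+2(m-1)=2m$ from one diagonal coordinate and $m-1$ off-diagonal pairs, each pair carrying two real coordinates. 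The complexification cross-check is also valid: $\det_{\mathbb{C}}(X\mapsto BX\bar{B}')=\det(\bar{B}\otimes B)=|\det B|^{2m}$, and the real splitting $\mathbb{C}^{m\times m}=\mathcal{H}_m\oplus i\mathcal{H}_m$, on whose summands $S$ acts isomorphically via $H\mapsto iH$, legitimately halves the exponent from $4m$ to $2m$. One point you rightly make explicit and that is worth keeping: in the paper's notation $|\cdot|$ is the determinant, so the stated factor $|B|^{2m}$ only makes sense as $|\det B|^{2m}=|\bar{B}'B|^m$ (an even power of a complex determinant is not automatically positive), and that is exactly the form in which the theorem is later applied, e.g.\ $(\ud T)=|M|^{-r}(\ud V)$ for the Hermitian positive definite factor $M^{1/2}$ in the proof of the CMACG density.
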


\section{Complex Matrix Angular Central Gaussian Distribution}
\label{sec:CMACG}

Following the idea of the MACG distribution of Chikuse (\citeyear{Chikuse1990}, \citeyear{Chikuse2003}) we analyze the distribution of the "orientation" part ($H_Z$) of polar decomposition of the full column rank random matrix $Z_{m\times r}$, $m\geq r$, $r(Z)=r$.\\
The unique polar decomposition of $Z$ is defined as:
$$Z=H_ZT_Z^{\frac{1}{2}},\quad H_Z=Z(\bar{Z}'Z)^{-\frac{1}{2}},\quad T_Z=\bar{Z}'Z.$$

\begin{lemma}
The measure $(\ud Z)$ is decomposed as
 \begin{equation}
(\ud Z)=\frac{\pi^{mr}}{\Gamma_r^{\mathbb{C}}[m]}|T_Z|^{m-r}(\ud T_Z)[\ud H_Z].
\label{eq:dek}
\end{equation}
\label{lem:dek}
\end{lemma}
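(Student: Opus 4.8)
The plan is to realise the decomposition as the Jacobian of the change of variables $Z\mapsto\nl H_Z,T_Z\np$ coming from the polar decomposition $Z=H_ZT_Z^{1/2}$, computed by the method of exterior differential forms as in the real case of Chikuse. Setting $S=T_Z^{1/2}$ and differentiating $Z=H_ZS$ gives $\ud Z=\nl\ud H_Z\np S+H_Z\,\ud S$. I would extend $H_Z$ to a unitary matrix $\mathbf H=\nl H_Z,\ H_1\np$ with $\bar H_1'H_Z=0$ and left-multiply by $\bar{\mathbf H}'$; this is a unitary transformation and so preserves $(\ud Z)$ (the standard fixed-frame argument). The differential then splits into
\[
\bar H_Z'\,\ud Z=\nl\bar H_Z'\,\ud H_Z\np S+\ud S,\qquad \bar H_1'\,\ud Z=\nl\bar H_1'\,\ud H_Z\np S .
\]
Writing $\Omega:=\bar H_Z'\,\ud H_Z$ ($r\times r$ skew-Hermitian) and $K:=\bar H_1'\,\ud H_Z$ ($(m-r)\times r$), the forms in $\Omega$ and $K$ carry exactly the $2mr-r^2$ tangent directions of $V_{r,m}^{\mathbb C}$, while $\ud S$ carries the remaining $r^2$ Hermitian directions, consistently with the $2mr$ real dimensions of $Z$.

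The off-diagonal block $\bar H_1'\,\ud Z=KS$ is handled by Theorem 1: $K$ is an $(m-r)\times r$ array of one-forms multiplied on the right by $S$, so its exterior product equals $|\bar S'S|^{\,m-r}=|T_Z|^{m-r}$ times the wedge of the entries of $K$ (the exponent being $m-r$, the number of rows of $K$). This block is routine and produces the factor $|T_Z|^{m-r}$ together with the ``perpendicular'' part of the Stiefel measure.

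The crux is the diagonal block $M:=\Omega S+\ud S$, a general $r\times r$ complex matrix mixing the skew-Hermitian $\Omega$ with the Hermitian $\ud S$. Since $\bar M'=-S\Omega+\ud S$, the skew-Hermitian part of $M$ equals $\tfrac12\nl\Omega S+S\Omega\np$ and involves only $\Omega$, whereas the Hermitian part equals $\ud S+\tfrac12\nl\Omega S-S\Omega\np$ and reduces to $\ud S$ at unit Jacobian once $\Omega$ is frozen. This triangular structure reduces the Jacobian to $\det_{\mathbb R}\nl\Omega\mapsto\tfrac12(\Omega S+S\Omega)\np$ on skew-Hermitian matrices, which I would evaluate by diagonalising $S=U\Lambda\bar U'$ (unitary conjugation being measure-preserving) so that the $(i,j)$ entry is rescaled by $\tfrac12(\lambda_i+\lambda_j)$. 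It remains to replace $\ud S$ by $\ud T_Z$ through the Jacobian of $S\mapsto S^2$, whose eigenvalue factors cancel those just obtained.

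The delicate point, and the one I expect to require the most care, is the bookkeeping of the powers of $2$. The change between the entrywise product $(\ud M)$ and the product in Hermitian/skew-Hermitian coordinates contributes $2^{r(r-1)}$; this combines with $2^{-r}$ from $S\mapsto S^2$ and $2^{-r(r-1)}$ from the off-diagonal rescalings, and together with the cancellation of all eigenvalue-dependent terms it leaves exactly the constant $2^{-r}$. Collecting the two blocks yields $(\ud Z)=2^{-r}|T_Z|^{m-r}(\ud T_Z)\nl\bar H_Z'\,\ud H_Z\np$; substituting $\nl\bar H_Z'\,\ud H_Z\np=Vol\nl V_{r,m}^{\mathbb C}\np[\ud H_Z]=\tfrac{2^r\pi^{mr}}{\Gamma_r^{\mathbb C}[m]}[\ud H_Z]$ from the definition of the normalized invariant measure gives the claimed formula.
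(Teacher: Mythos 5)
Your proposal is correct, but it takes a genuinely different route from the paper. The paper's entire proof is two lines: it cites the known complex measure decomposition $(\ud Z)=2^{-r}|T_Z|^{m-r}(\ud T_Z)(\bar{H}'_Z\,\ud H_Z)$ directly from D\'iaz-Garc\'ia and Guti\'errez-J\'aimez (2011) and then substitutes the normalization $(\bar{H}'_Z\,\ud H_Z)=\tfrac{2^r\pi^{mr}}{\Gamma_r^{\mathbb{C}}[m]}[\ud H_Z]$ from Equation~(\ref{eq:inv}); that substitution is exactly your final sentence. Everything before that in your write-up is a from-scratch derivation of the cited formula by exterior calculus, and the bookkeeping does check out: the off-diagonal block gives $|\bar S'S|^{m-r}=|T_Z|^{m-r}$ by Theorem~1 applied with $m-r$ rows; the skew-Hermitian block contributes $2^{-r(r-1)}\prod_i\lambda_i\prod_{i<j}(\lambda_i+\lambda_j)^2$, which cancels against the Jacobian $(\ud T_Z)=2^{r}\prod_i\lambda_i\prod_{i<j}(\lambda_i+\lambda_j)^2(\ud S)$ of $S\mapsto S^2$ up to $2^{-r}$; and combined with the $2^{r(r-1)}$ from passing to Hermitian/skew-Hermitian coordinates one indeed gets $2^{r(r-1)}\cdot2^{-r(r-1)}\cdot2^{-r}=2^{-r}$. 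What your approach buys is self-containedness --- the constant $2^{-r}$, on which the lemma's normalization hinges, is verified rather than taken on faith from the reference --- at the cost of considerable length and of several steps (the triangularity of the mixed block, the measure-invariance of unitary conjugation on skew-Hermitian matrices) that are asserted as a plan rather than carried out in full; the paper's citation-based proof is shorter but transfers all of that burden to the external source.
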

\begin{proof}
It is the straightforward consequence of the decomposition of the measure\\ $(\ud Z)=2^{-r}|T_Z|^{m-r}(\ud T_Z)(\bar{H}'_Z\ \ud H_Z)$ (see \citealp{Di_Gu2011}) and the definition of the normalized invariant measure on the complex Stiefel manifold (see Equation \ref{eq:inv}).
\end{proof}
Using Lemma \ref{lem:dek} we obtain that the density of the orientation $H_Z$:
\begin{equation}
f_{H_Z}(H_Z)=\frac{\pi^{mr}}{\Gamma_r^{\mathbb{C}}[m]}\int_{T>0,\bar{T}'=T}f_Z(H_ZT^\frac{1}{2})|T|^{m-r}(\ud T).
\label{eq:densHZ}
\end{equation}
\begin{theorem}
Assume that $Z_{m\times r}$ has the $m\times r$ the matrix-variate complex central normal distribution with the parameter $P$, $Z\sim mN^{\mathbb{C}}(0,I_r,P)$, where $P$ is an $m\times m$ positive definite matrix and define $H_Z=Z(\bar{Z}'Z)^{-\frac{1}{2}}\in V_{r,m}^{\mathbb{C}}$.\\
Then it is said that $H_Z$ has a complex matrix angular central Gaussian distribution with parameter $P$, denoted as $H_Z\sim CMACG(P)$, and its density is
\begin{equation} 
f_{H_Z}(H_Z)=|P|^{-r}|\bar{H}'_ZP^{-1}H_Z|^{-m}.
\label{eq:CMACG}
\end{equation}
\label{th:CMACGdef}
\end{theorem}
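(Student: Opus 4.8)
The plan is to feed the complex matrix--normal density directly into the marginal density formula \eqref{eq:densHZ} and to reduce the resulting matrix integral to the complex Gamma integral that defines $\Gamma_r^{\mathbb{C}}[\cdot]$. First I would record the density of $Z\sim mN^{\mathbb{C}}(0,I_r,P)$, namely
\[
f_Z(Z)=\frac{1}{\pi^{mr}|P|^r}\exp\{-tr(P^{-1}Z\bar{Z}')\},
\]
where the column parameter contributes the factor $|I_r|^m=1$. Substituting the polar factorisation $Z=H_ZT^{\frac12}$ with $T$ Hermitian positive definite, and using that the principal square root $T^{\frac12}$ is itself Hermitian, gives $Z\bar{Z}'=H_ZT\bar{H}_Z'$, so by the cyclic property of the trace $tr(P^{-1}Z\bar{Z}')=tr(\bar{H}_Z'P^{-1}H_Z\,T)$.

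Writing $A=\bar{H}_Z'P^{-1}H_Z$, an $r\times r$ Hermitian positive definite matrix, and plugging the above into \eqref{eq:densHZ}, the prefactors $\pi^{mr}$ cancel and the problem reduces to evaluating
\[
\int_{T>0,\ \bar{T}'=T}\exp\{-tr(AT)\}\,|T|^{m-r}\,(\ud T).
\]
I would evaluate this by the change of variables $S=A^{\frac12}T A^{\frac12}=A^{\frac12}T\,\overline{A^{\frac12}}{}'$, which is exactly the Hermitian congruence of Theorem~2 with $B=A^{\frac12}$; it contributes the Jacobian $(\ud S)=|A^{\frac12}|^{2r}(\ud T)=|A|^{r}(\ud T)$. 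Since $tr(AT)=tr(S)$ and $|T|=|A|^{-1}|S|$, the integral becomes $|A|^{-m}\int\exp\{-tr(S)\}|S|^{m-r}(\ud S)=|A|^{-m}\,\Gamma_r^{\mathbb{C}}[m]$ by the definition of the complex multivariate Gamma function quoted in the introduction.

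Collecting the constants, the factor $\Gamma_r^{\mathbb{C}}[m]$ cancels against the one in \eqref{eq:densHZ}, leaving
\[
f_{H_Z}(H_Z)=\frac{|A|^{-m}}{|P|^r}=|P|^{-r}\,|\bar{H}_Z'P^{-1}H_Z|^{-m},
\]
which is the claimed density \eqref{eq:CMACG}. The only genuinely non-routine step is the evaluation of the matrix Gamma integral for a general Hermitian $A$: everything hinges on recognising the congruence $T\mapsto A^{\frac12}TA^{\frac12}$ and on tracking the two determinant factors it produces, namely the Jacobian $|A|^{r}$ from Theorem~2 and the $|A|^{-(m-r)}$ coming from $|T|^{m-r}$, whose product yields the exponent $-m$ on $|A|$. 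I would also double-check the normalising constant of the complex matrix normal, since the surviving power $|P|^{r}$ is precisely what is needed to match \eqref{eq:CMACG}.
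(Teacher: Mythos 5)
Your proposal is correct and follows essentially the same route as the paper: substitute the complex matrix-normal density into the marginal formula for $f_{H_Z}$, set $M=\bar{H}_Z'P^{-1}H_Z$, and reduce the integral to $\Gamma_r^{\mathbb{C}}[m]$ via the Hermitian congruence $T\mapsto M^{1/2}TM^{1/2}$ with Jacobian $|M|^{r}$. The only cosmetic difference is that you write the exponent as $tr(P^{-1}Z\bar{Z}')$ rather than $tr(\bar{Z}'P^{-1}Z)$, which is the same quantity by cyclicity of the trace.
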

\begin{proof}
The density of $Z$ is
$$f_Z(Z)=\pi^{-mr}|P|^{-r}\exp[-tr(\bar{Z}'P^{-1}Z)],$$
so according to (\ref{eq:dek}) the density of $H_Z$ is obtained as
\begin{eqnarray*}
f_{H_Z}(H_Z)&\underbrace{=}_{(\ref{eq:densHZ})}&\frac{\pi^{mr}}{\Gamma_r^{\mathbb{C}}[m]}\int_{T>0,\bar{T}'=T}f_Z(H_ZT^\frac{1}{2})|T|^{m-r}(\ud T)=\\
&=&\frac{\pi^{mr}}{\Gamma_r^{\mathbb{C}}[m]}\int_{T>0,\bar{T}'=T}\pi^{-mr}|P|^{-r}\exp[-tr(T^{\frac{1}{2}}\bar{H}'_ZP^{-1}H_ZT^\frac{1}{2})]|T|^{m-r}(\ud T)=\\
&=&\frac{|P|^{-r}}{\Gamma_r^{\mathbb{C}}[m]}\int_{T>0,\bar{T}'=T}\exp[-tr(T^{\frac{1}{2}}\bar{H}'_ZP^{-1}H_ZT^\frac{1}{2})]|T|^{m-r}(\ud T).
\end{eqnarray*}
In the integral make the change of variables $V=M^{\frac{1}{2}}TM^{\frac{1}{2}}$, where $M$ stands for $\bar{H}'_ZP^{-1}H_Z$. By (\ref{eq:jak}) $(\ud T)=|M|^{-r}(\ud V)$ so the integral becomes
\begin{eqnarray*}
f_{H_Z}(H_Z)&=&\frac{|P|^{-r}}{\Gamma_r^{\mathbb{C}}[m]}\int_{V>0,\bar{V}'=V}\exp[-tr(V)]|VM^{-1}|^{m-r}|M|^{-r}(\ud V)=\\
&=&\frac{|P|^{-r}}{\Gamma_r^{\mathbb{C}}[m]}|M|^{-m}\int_{V>0,\bar{V}'=V}\exp[-tr(V)]|V|^{m-r}(\ud V)=\\
&=&\frac{|P|^{-r}}{\Gamma_r^{\mathbb{C}}[m]}|M|^{-m}\Gamma_r^{\mathbb{C}}[m]=\\
&=&|P|^{-r}|\bar{H}'_ZP^{-1}H_Z|^{-m}.
\end{eqnarray*}
\end{proof}
Note the distribution in question inherits the properties from its real counterpart.\\
There is an indeterminacy in the matrix parameter $P$ by multiplication by a positive scalar (i.e. CMACG($P$)=CMACG($cP$), where $c>0$). For $P=I_m$ the orientation $H_Z$ is uniformly distributed over the complex Stiefel manifold. It should be also emphasized that CMACG distribution is invariant under right unitary transformations ($H_Z\rightarrow H_ZQ,\ Q\in O(r)$), so it can be treated as the distribution defined on the complex Grassmann manifold.

The decomposition (\ref{eq:dek}) leads to the feature stated below (see \citealp{Chikuse1990}, Theorem 2.3 for the more extended discussion of the characterization of such distribution in the real case).
\begin{theorem}
If the $m\times r$ complex random matrix $Z$ has the density of the form $g(\bar{Z}'Z)$ then its orientation $H_Z$ is uniformly distributed on $V_{r,m}^{\mathbb{C}}$.
\label{th:gZZ}
\end{theorem}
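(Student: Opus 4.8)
The plan is to feed the assumed density form directly into the orientation density formula (\ref{eq:densHZ}) and observe that all dependence on $H_Z$ disappears. First I would write $f_Z(Z)=g(\bar{Z}'Z)$ and evaluate it at the polar substitution $Z=H_Z T^{\frac{1}{2}}$ that appears in (\ref{eq:densHZ}). The essential simplification is the computation of $\bar{Z}'Z$ under this substitution: since $T>0$ is Hermitian its principal square root $T^{\frac{1}{2}}$ is Hermitian as well, and since $H_Z\in V_{r,m}^{\mathbb{C}}$ we have $\bar{H}'_Z H_Z=I_r$, so
\begin{equation*}
\overline{\bigl(H_Z T^{\frac{1}{2}}\bigr)}{}'\,\bigl(H_Z T^{\frac{1}{2}}\bigr)=T^{\frac{1}{2}}\bar{H}'_Z H_Z T^{\frac{1}{2}}=T.
\end{equation*}
Consequently $f_Z(H_Z T^{\frac{1}{2}})=g(T)$, which carries no dependence on $H_Z$ whatsoever.

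Substituting this into (\ref{eq:densHZ}) then yields
\begin{equation*}
f_{H_Z}(H_Z)=\frac{\pi^{mr}}{\Gamma_r^{\mathbb{C}}[m]}\int_{T>0,\bar{T}'=T}g(T)|T|^{m-r}(\ud T),
\end{equation*}
a quantity that is constant in $H_Z$. Thus the density of $H_Z$ with respect to the normalized invariant measure $[\ud H_Z]$ is constant over $V_{r,m}^{\mathbb{C}}$, which is exactly the assertion that $H_Z$ is uniformly distributed. The constant must in fact equal $1$: integrating a constant density against the unit-mass measure $[\ud H_Z]$ returns the constant itself, so normalization forces it to be $1$. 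Equivalently, one checks that the displayed integral equals $\Gamma_r^{\mathbb{C}}[m]/\pi^{mr}$ by applying the decomposition (\ref{eq:dek}) of Lemma \ref{lem:dek} to the total-mass condition $\int g(\bar{Z}'Z)(\ud Z)=1$.

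I do not anticipate a genuine obstacle. The only step demanding care is the algebraic collapse $\bar{Z}'Z=T$, which relies simultaneously on the semi-unitarity of $H_Z$ and on the Hermitian symmetry of $T^{\frac{1}{2}}$; once that identity is in place, the conclusion is immediate from the measure decomposition already established. A secondary point worth a line is tacit convergence of the $T$-integral, but this is guaranteed by the hypothesis that $g(\bar{Z}'Z)$ is a genuine density, so no separate estimate is needed.
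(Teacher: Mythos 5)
Your proof is correct and follows exactly the paper's own argument: substitute $Z=H_ZT^{1/2}$ into the orientation density formula (\ref{eq:densHZ}), use $\bar{H}'_ZH_Z=I_r$ to collapse $f_Z(H_ZT^{1/2})$ to $g(T)$, and conclude the resulting integral is constant in $H_Z$. The extra remark that the constant equals $1$ by normalization is a harmless addition the paper omits.
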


\begin{proof}
With the help of (\ref{eq:densHZ}) we obtain:
\begin{eqnarray*}
f_{H_Z}(H_Z)&=&\frac{\pi^{mr}}{\Gamma_r^{\mathbb{C}}[m]}\int_{T>0,\bar{T}'=T}f_Z(H_ZT^\frac{1}{2})|T|^{m-r}(\ud T)=\\
&=&\frac{\pi^{mr}}{\Gamma_r^{\mathbb{C}}[m]}\int_{T>0,\bar{T}'=T}g(T^\frac{1}{2}\bar{H}'_ZH_ZT^\frac{1}{2})|T|^{m-r}(\ud T)=\\
&\underbrace{=}_{\bar{H}'_ZH_Z=I_r}&\frac{\pi^{mr}}{\Gamma_r^{\mathbb{C}}[m]}\int_{T>0,\bar{T}'=T}g(T)|T|^{m-r}(\ud T)=\\
&=&const.
\end{eqnarray*}
\end{proof}

\begin{theorem}
Let Z be an $m\times r$ complex random matrix with the density $f_Z(Z)$ invariant under right unitary transformation $(Z\rightarrow ZQ,\ \bar{Q}'Q=I_r)$. Define a new $m\times r$ random matrix $Y=BZ$ with an $m\times m$ non-singular matrix $B$, $(|B|\neq 0)$. Consider polar decomposition of these matrices:
\begin{itemize}
\item $Z=H_ZT_Z^{1/2}$ with $H_Z=Z(Z'Z)^{-1/2}$ and $T_Z=Z'Z$,
\item $Y=H_YT_Y^{1/2}$ with $H_Y=Y(Y'Y)^{-1/2}$ and $T_Y=Y'Y$.
\end{itemize}
and let $f_{H_Z}(H_Z)$ be the density of $H_Z$ (see Theorem \ref{th:CMACGdef}). Then the density of $H_Y$, the orientation of the random matrix $Y$, is of the form:
\begin{equation}
f_{H_Y}(H_Y)=|\bar{B}'B|^{-r}|W'W|^{-m}f_{H_Z}(H_W),
\label{eq:HYdens}
\end{equation}
where $W=B^{-1}H_Y$ and $H_W$ is the orientation of $W$, i.e. $H_W=W(W'W)^{-1/2}$.
\label{th:BZ}
\end{theorem}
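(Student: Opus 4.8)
The plan is to apply the orientation–density formula (\ref{eq:densHZ}) directly to $Y$, push the linear change of variables $Y=BZ$ onto $f_Z$, and then reduce the resulting integral to the one that defines $f_{H_Z}$. First I would write, using (\ref{eq:densHZ}) applied to $Y$,
\[
f_{H_Y}(H_Y)=\frac{\pi^{mr}}{\Gamma_r^{\mathbb{C}}[m]}\int_{T_Y>0,\bar{T}_Y'=T_Y}f_Y(H_Y T_Y^{1/2})|T_Y|^{m-r}(\ud T_Y).
\]
Since $Z\mapsto BZ$ is linear with constant nonsingular $B$, the Jacobian theorem for $Y=AXB+C$ (with left factor $B$, right factor $I_r$ and $C=0$) gives $J(Y\to Z)=|\bar{B}'B|^{r}$, hence $f_Y(Y)=|\bar{B}'B|^{-r}f_Z(B^{-1}Y)$. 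Substituting and writing $W=B^{-1}H_Y$ turns the integrand into $f_Z(W T_Y^{1/2})$, so that
\[
f_{H_Y}(H_Y)=|\bar{B}'B|^{-r}\frac{\pi^{mr}}{\Gamma_r^{\mathbb{C}}[m]}\int_{T_Y>0,\bar{T}_Y'=T_Y}f_Z(W T_Y^{1/2})|T_Y|^{m-r}(\ud T_Y).
\]
The prefactor $|\bar{B}'B|^{-r}$ already matches the one in (\ref{eq:HYdens}), so it remains to evaluate the integral in terms of $H_W$.

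This is the key step. I would polar-decompose $W=H_W T_W^{1/2}$ with $T_W=\bar{W}'W$, so the argument becomes $H_W T_W^{1/2}T_Y^{1/2}$. The difficulty, and the main obstacle of the proof, is that $T_W^{1/2}$ and $T_Y^{1/2}$ do not commute, so they cannot be merged into a single Hermitian square root and no naive substitution linearizes them. This is exactly where the right-unitary invariance of $f_Z$ is used. I would set $S=T_W^{1/2}T_Y T_W^{1/2}$, which is positive definite Hermitian, and note that the two full-column-rank matrices $H_W T_W^{1/2}T_Y^{1/2}$ and $H_W S^{1/2}$ produce the same value of $M\bar{M}'$ (both equal $H_W S\bar{H}_W'$). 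Two full-rank $m\times r$ matrices with equal $M\bar{M}'$ share a column space and therefore differ by right multiplication by an $r\times r$ unitary matrix, whence $f_Z(H_W T_W^{1/2}T_Y^{1/2})=f_Z(H_W S^{1/2})$ by invariance. I would then change variables $T_Y\mapsto S$: by the Jacobian formula (\ref{eq:jak}) for the congruence $S=T_W^{1/2}T_Y T_W^{1/2}$ one has $(\ud T_Y)=|T_W|^{-r}(\ud S)$, while $|T_Y|^{m-r}=|T_W|^{-(m-r)}|S|^{m-r}$, so the two powers of $|T_W|$ combine into the single global factor $|T_W|^{-m}=|\bar{W}'W|^{-m}$.

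Finally I would recognize the remaining integral
\[
\frac{\pi^{mr}}{\Gamma_r^{\mathbb{C}}[m]}\int_{S>0,\bar{S}'=S}f_Z(H_W S^{1/2})|S|^{m-r}(\ud S)
\]
as precisely $f_{H_Z}(H_W)$, by (\ref{eq:densHZ}) applied to $Z$ at the semi-unitary point $H_W$. Collecting the factor $|\bar{B}'B|^{-r}$ from the first step with the factor $|\bar{W}'W|^{-m}$ from the congruence substitution yields exactly (\ref{eq:HYdens}). I expect the whole argument to hinge on the invariance hypothesis used in the middle paragraph; without it one could not pass from $H_W T_W^{1/2}T_Y^{1/2}$ to $H_W S^{1/2}$, and the congruence substitution would not produce the clean power $|\bar{W}'W|^{-m}$.
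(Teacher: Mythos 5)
Your proposal is correct and follows essentially the same route as the paper: the Jacobian step $f_Y(Y)=|\bar{B}'B|^{-r}f_Z(B^{-1}Y)$, the congruence substitution $S=(\bar{W}'W)^{1/2}T(\bar{W}'W)^{1/2}$ with $(\ud T)=|\bar{W}'W|^{-r}(\ud S)$, and the invariance of $f_Z$ to replace $f_Z(WT^{1/2})$ by $f_Z(H_WS^{1/2})$ are exactly the paper's steps. Your only addition is an explicit justification (via equality of $M\bar{M}'$ forcing a right unitary factor) of the invariance identity that the paper merely asserts.
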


\begin{proof}
Knowing the density of Z and the Jacobian of transformation $Z\rightarrow BZ=Y,\ (\ud Y)=|\bar{B}'B|^{r}(\ud Z)$ we may obtain the density of $Y$:
\begin{equation}
f_Y(Y)=|\bar{B}'B|^{-r}f_Z(B^{-1}Y),
\end{equation}
which together with (\ref{eq:densHZ}) leads to the density of $H_Y$:
\begin{equation}
f_{H_Y}(H_Y)=\frac{\pi^{mr}}{\Gamma_r^{\mathbb{C}}}|\bar{B}'B|^{-r}\int_{T>0,\bar{T}'=T}f_Z(B^{-1}H_YT^\frac{1}{2})|T|^{m-r}(\ud T).
\label{eq:fHY}
\end{equation}
We follow \citet{Chikuse1990} and apply the idea of her transformation (3.4) to the complex case:
\begin{equation}
T=(\bar{W}'W)^{-1/2}S(\bar{W}'W)^{-1/2},\ \text{with}\ W=B^{-1}H_Y,
\label{eq:trans}
\end{equation}
the Jacobian od this transformation leads to the relationship between measures $(\ud T)=|(\bar{W}'W)^{-/2}|^{2r}(\ud S)=|\bar{W}'W|^{-r}(\ud S)$.\\
From the invariance property of the density of $Z$ we have
\begin{equation}
f_Z(WT^{1/2})=f_Z(H_WS^{1/2}).
\label{eq:invfZ}
\end{equation}

Now we can combine the above stated transformation and present the density of $H_Y$ as:
\begin{eqnarray*}
f_{H_Y}(H_Y)&=&\frac{\pi^{mr}}{\Gamma_r^{\mathbb{C}}}|\bar{B}'B|^{-r}\times\\
&\times&\int_{S>0,\bar{S}'=S}f_Z(H_WS^\frac{1}{2})|(\bar{W}'W)^{-1/2}S(\bar{W}'W)^{-1/2}|^{m-r}|\bar{W}'W|^{-r}(\ud S)=\\
&=&|\bar{B}'B|^{-r}|\bar{W}'W|^{-m}\frac{\pi^{mr}}{\Gamma_r^{\mathbb{C}}}\int_{S>0,\bar{S}'=S}f_Z(H_WS^\frac{1}{2})|S|^{m-r}(\ud S)=\\
&\underbrace{=}_{(\ref{eq:densHZ})}&|\bar{B}'B|^{-r}|\bar{W}'W|^{-m}f_{H_Z}(H_W).
\end{eqnarray*} 
\end{proof}

Theorem \ref{th:BZ} leads to the following feature of the CMACG distribution for the linear transformations of complex random matrices.
\begin{corollary}
If $H_Z$, the orientation of $Z$, has the CMACG($P$) distribution, then $H_Y$, the orientation of $Y=BZ$, has the CMACG($BP\bar{B}'$) distribution.
\label{cor:BZ}
\end{corollary}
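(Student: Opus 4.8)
The plan is to feed the explicit CMACG density of Theorem~\ref{th:CMACGdef} into the transformation rule of Theorem~\ref{th:BZ} and check that the result is again of CMACG type with the claimed parameter. Since $H_Z\sim CMACG(P)$, we have $f_{H_Z}(H_W)=|P|^{-r}|\bar{H}'_WP^{-1}H_W|^{-m}$, while Theorem~\ref{th:BZ} gives $f_{H_Y}(H_Y)=|\bar{B}'B|^{-r}|\bar{W}'W|^{-m}f_{H_Z}(H_W)$ with $W=B^{-1}H_Y$. From here the whole argument reduces to a determinant computation.

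First I would rewrite the quadratic forms appearing in the CMACG density in terms of $H_Y$ and $B$. Writing $W=B^{-1}H_Y$ gives $\bar{W}'W=\bar{H}'_Y(B\bar{B}')^{-1}H_Y$, and since $H_W=W(\bar{W}'W)^{-1/2}$ the argument of the density becomes
\begin{equation*}
\bar{H}'_WP^{-1}H_W=(\bar{W}'W)^{-1/2}\,\bar{H}'_Y(BP\bar{B}')^{-1}H_Y\,(\bar{W}'W)^{-1/2},
\end{equation*}
where the crucial manipulation is the identity $(\bar{B}')^{-1}P^{-1}B^{-1}=(BP\bar{B}')^{-1}$, which is precisely what introduces the transformed parameter $BP\bar{B}'$.

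Next I would take determinants. The displayed relation yields $|\bar{H}'_WP^{-1}H_W|=|\bar{W}'W|^{-1}|\bar{H}'_Y(BP\bar{B}')^{-1}H_Y|$, so $f_{H_Z}(H_W)=|P|^{-r}|\bar{W}'W|^{m}|\bar{H}'_Y(BP\bar{B}')^{-1}H_Y|^{-m}$. Substituting this into the formula of Theorem~\ref{th:BZ}, the factors $|\bar{W}'W|^{-m}$ and $|\bar{W}'W|^{m}$ cancel, leaving
\begin{equation*}
f_{H_Y}(H_Y)=|\bar{B}'B|^{-r}|P|^{-r}|\bar{H}'_Y(BP\bar{B}')^{-1}H_Y|^{-m}.
\end{equation*}

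Finally I would reconcile the normalizing constant with that of $CMACG(BP\bar{B}')$, whose density is $|BP\bar{B}'|^{-r}|\bar{H}'_Y(BP\bar{B}')^{-1}H_Y|^{-m}$. It therefore suffices to verify the scalar identity $|BP\bar{B}'|=|\bar{B}'B|\,|P|$, which follows from multiplicativity of the determinant together with $|B|\,|\bar{B}'|=|\bar{B}'B|$. I expect the only real point of care to be the bookkeeping of conjugate transposes in the complex setting—in particular getting $(\bar{B}')^{-1}P^{-1}B^{-1}=(BP\bar{B}')^{-1}$ and the corresponding determinant identity exactly right—rather than any conceptual obstacle.
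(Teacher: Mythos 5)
Your proposal is correct and follows essentially the same route as the paper's own proof: substitute the CMACG density into the transformation formula of Theorem~\ref{th:BZ}, use $\bar{H}'_WP^{-1}H_W=(\bar{W}'W)^{-1/2}\bar{W}'P^{-1}W(\bar{W}'W)^{-1/2}$ to cancel the $|\bar{W}'W|^{\pm m}$ factors, and absorb $|\bar{B}'B|^{-r}|P|^{-r}$ into $|BP\bar{B}'|^{-r}$. The only difference is presentational—you spell out the determinant identities slightly more explicitly than the paper does.
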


\begin{proof}
As the orientation $H_Z$ has CMACG($P$) distribution its density is $f_{H_Z}(H_Z)=|P|^{-r}|\bar{H}'_ZP^{-1}H_Z|^{-m}$, see (\ref{eq:CMACG}). Using (\ref{eq:HYdens}) form Theorem \ref{th:BZ} we obtain
\begin{eqnarray*}
f_{H_Y}(H_Y)&=&|\bar{B}'B|^{-r}|W'W|^{-m}f_{H_Z}(H_W)=\\
&=&|\bar{B}'B|^{-r}|W'W|^{-m}|P|^{-r}|\bar{H}'_WP^{-1}H_W|^{-m}=\\
&=&|\bar{B}'B|^{-r}|W'W|^{-m}|P|^{-r}|(W'W)^{-1/2}\bar{W}'P^{-1}W(W'W)^{-1/2}|^{-m}=\\
&=&|\bar{B}'B|^{-r}|W'W|^{-m}|P|^{-r}|W'W|^m|\bar{W}'P^{-1}W|^{-m}=\\
&=&|\bar{B}'B|^{-r}|P|^{-r}|\bar{H}_Y'(\bar{B}^{-1})'P^{-1}B^{-1}H_Y|^{-m}=\\
&=&|BP\bar{B}'|^{-r}|\bar{H}_Y'(BP\bar{B}')^{-1}H_Y|^{-m},\\
\end{eqnarray*}
which is the density of CMACG($BP\bar{B}'$).
\end{proof}

The features stated in Theorem \ref{th:gZZ} and Corollary \ref{cor:BZ} let us define a more general class of random matrices with orientations having CMACG($P$) distribution.

\begin{theorem}
Assume that an $m\times r$ random complex matrix $Z$ has the density of the form
\begin{equation}
f_Z(Z)=|P|^{-r}g(\bar{Z}'P^{-1}Z)
\end{equation}
invariant under right unitary transformation ($Z\rightarrow ZQ$, $\bar{Q}'Q=I_r$) with $P$ being an $m\times m$ positive define matrix, then its orientation $H_Z$ has the CMACG($P$) distribution.
\label{th:CMACGprop}
\end{theorem}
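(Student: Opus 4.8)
The lead-in sentence already points to the strategy I would follow: deduce the result by combining Theorem \ref{th:gZZ} with Corollary \ref{cor:BZ}, rather than by a fresh integral computation. The plan is to ``whiten'' $Z$ by the Hermitian positive definite square root of $P$. I would set $U=P^{-1/2}Z$, equivalently $Z=P^{1/2}U$ with $B:=P^{1/2}$ nonsingular. By the Jacobian formula of \citet{Di_Gu2011} (Theorem 1) the change $Z=P^{1/2}U$ carries $|P|^{r}$, and since $P^{1/2}$ is Hermitian one has $\bar Z'P^{-1}Z=\bar U'U$; hence the density of $U$ is
\begin{equation*}
f_U(U)=|P|^{r}f_Z(P^{1/2}U)=g(\bar U'U).
\end{equation*}

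This density has exactly the form required by Theorem \ref{th:gZZ}, so the orientation $H_U$ is uniform on $V_{r,m}^{\mathbb{C}}$; equivalently, $H_U\sim CMACG(I_m)$. It then remains to transport this back through $Z=P^{1/2}U$ by applying Corollary \ref{cor:BZ} with parameter $I_m$ and matrix $B=P^{1/2}$, which would give $H_Z\sim CMACG\!\left(P^{1/2}I_mP^{1/2}\right)=CMACG(P)$, as claimed.

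The step that requires care — and the one I would write out in full — is verifying that the hypotheses of Corollary \ref{cor:BZ} (inherited from Theorem \ref{th:BZ}) actually hold for $U$, namely that $f_U$ is invariant under right unitary transformations. This is precisely where the assumed right invariance of $f_Z$ is used: applying $Z\mapsto ZQ$ shows $g(\bar Q'A Q)=g(A)$ for every unitary $Q$ and every positive definite Hermitian $A=\bar Z'P^{-1}Z$, and therefore $f_U(UQ)=g(\bar Q'\bar U'U Q)=g(\bar U'U)=f_U(U)$. Once this invariance is in place, the identification of the uniform law with $CMACG(I_m)$ and the parameter update $B\,I_m\,\bar B'=P^{1/2}P^{1/2}=P$ in Corollary \ref{cor:BZ} close the argument. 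As a cross-check I would note that the same conclusion follows by the direct route: substituting $f_Z(Z)=|P|^{-r}g(\bar Z'P^{-1}Z)$ into (\ref{eq:densHZ}) and repeating the change of variables $V=M^{1/2}TM^{1/2}$ with $M=\bar H_Z'P^{-1}H_Z$ from the proof of Theorem \ref{th:CMACGdef}; the unitary-conjugation invariance of $g$ lets one replace $g(T^{1/2}MT^{1/2})$ by $g(V)$, the factor $|M|^{-m}$ comes out, and the leftover $P$-free integral $\int_{V>0,\bar V'=V} g(V)|V|^{m-r}(\ud V)$ is pinned to $\Gamma_r^{\mathbb{C}}[m]/\pi^{mr}$ by normalization. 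The only genuine obstacle in either route is confirming that right unitary invariance of $f_Z$ descends to the whitened density $g(\bar U'U)$; everything else is the routine Jacobian bookkeeping already carried out earlier in the paper.
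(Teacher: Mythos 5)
Your proposal is correct and follows essentially the same route as the paper: whiten $Z$ via a factorization of $P$ (the paper uses a general $B$ with $P=B\bar{B}'$, you use the Hermitian square root $P^{1/2}$), invoke Theorem \ref{th:gZZ} to get uniformity of $H_U$, and then transport back with Corollary \ref{cor:BZ}. Your explicit check that right unitary invariance descends to $f_U$ is a welcome detail the paper only asserts.
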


\begin{proof}
The proof is a straightforward generalization of the proof to the Theorem 3.2 in \citet{Chikuse1990}.\\
There exists a matrix $B$ such that $P=B\bar{B}'$ with $|B|\neq 0$. Define $U=B^{-1}Z$, so the distribution of $U$ is
$$f_U(U)=|P|^{-r}g(\bar{U}'U)|P|^r=g(\bar{U}'U),$$ 
which is also invariant under right unitary transformation ($U\rightarrow UQ$, $\bar{Q}'Q=I_r$).\\
According to Theorem \ref{th:gZZ} the orientation of $U$ is uniformly distributed on $V_{r,m}^{\mathbb{C}}$, i.e. $H_U\sim CMACG(I_m)$.\\
From Corollarly \ref{cor:BZ} applied to the orientation of the matrix $Z=BU$ we obtain that
$$H_Z\sim CMACG(BI_m\bar{B}')=CMACG(P).$$
\end{proof}

\section{Sampling from the CMACG distribution}

As pointed in the Introduction, one of the advantages of the CMACG distribution, especially for Bayesians, is an easy way for obtaining a pseudo-random sample from it. To get it the researcher may use exactly the definition of the considered distribution. Note, that \citet{Koop_al2009} us the same strategy in the real case. Suppose that one needs the sample from CMACG($P$) distribution with the known matrix parameter $P$. According to Theorem \ref{th:CMACGdef} the orientation $H_Z$ of $Z$ - the normally distributed complex random matrix (i.e. $Z\sim mN^{\mathbb{C}}(0,I_r,P)$) has CMACG($P$) distribution, so one draw is generated in two steps:
\begin{enumerate}
\item Generate an $m\times r$ matrix $Z$ from $mN^{\mathbb{C}}(0,I_r,P)$.
\item Put $H_Z=Z(\bar{Z}'Z)^{-\frac{1}{2}}$, where $(\bar{Z}'Z)^{-\frac{1}{2}}$ is the inverse of the square root of $\bar{Z}'Z$ .
\end{enumerate}
It was mentioned that due to its invariance property CAMCG($P$) may be treated as distribution definite for the elements of the complex Grassmann manifold, so by putting $P_Z=H_Z\bar{H}_Z'$ we obtain the projection matrix from the desired distribution.\\
The above points need additional comments. Firstly, the easiest way to obtain the draw from the complex matrix variate distribution is to employ its relationship with the real case, i.e. the condition that $Z=Z_R+iZ_I$, where $i=\sqrt{-1}$ has complex normal distribution  $Z\sim mN^{\mathbb{C}}(0,I_r,P)$, where $P=P_R+iP_I$ is a Hermitian matrix, is equivalent that its real and imaginary part are jointly normally distributed $\left(\begin{array}{c}Z_R\\Z_I\end{array}\right)|\sim mN\left(\left(\begin{array}{c}0\\0\end{array}\right),I_{r},\frac{1}{2}\left(\begin{array}{rr}P_R&-P_I\\P_I&P_R\end{array}\right)\right)$. Secondly, the square root of a complex Hermitan matrix, $(\bar{Z}'Z)^{\frac{1}{2}}$, needed in the polar decomposition of $Z$, can be obtained with the Newton's method proposed by \citet{Highman1986}.

\section{Conclusions}
\label{sec:conc}
This paper extends the matrix angular central distribution proposed by \citet{Chikuse1990} to the complex case. Considering the polar decomposition of a random complex matrix and the appropriate decomposition of measures we obtained the density function of this matrix's orientation, which is the element of the complex Stiefel manifold. We show that this new distribution inherits the properties after MACG distribution. We also discuss the way of obtaining a pseudo-random sample from this distribution.\\
It is worth emphasizing again that the complex matrix angular central distribution might be useful in the Bayesian analysis of VEC models with complex unit roots, e.g. in seasonally cointegrated VAR models.

\end{document}